\def\@defaultbiblabelstyle#1{[#1]}
\def\@setauthors{
  \begingroup
  \def\thanks{\protect\thanks@warning}
  \trivlist
  \centering\footnotesize \@topsep30\p@\relax
  \advance\@topsep by -\baselineskip
  \item\relax
  \author@andify\authors
  \def\\{\protect\linebreak}
  \authors
  \ifx\@empty\contribs
  \else
    ,\penalty-3 \space \@setcontribs
    \@closetoccontribs
  \fi
  \endtrivlist
  \endgroup
}
\def\@settitle{\begin{center}
  \baselineskip14\p@\relax
    \bfseries
  \@title
  \end{center}
}
\setlist[enumerate]{label=\upshape(\arabic*)}
\numberwithin{equation}{section}
\newtheorem{theorem}{Theorem}[section]
\newtheorem{lemma}[theorem]{Lemma}
\newtheorem{proposition}[theorem]{Proposition}
\newtheorem{corollary}[theorem]{Corollary}
\newtheorem{question}[theorem]{Question}
\theoremstyle{definition}
\theoremstyle{remark}
\newtheorem{remark}[theorem]{Remark}
\begin{document}

\title{Strongly dominant weight polytopes are cubes}

\keywords{dominant weight polytope, strongly dominant weight, face structure, cube, Peterson variety}

\subjclass[2020]{05E10 (primary), 52B05 (secondary)}

\author{Gaston Burrull}
\address{(Gaston Burrull) \newline \indent Beijing International Center for Mathematical Research, Peking University, No.\ 5 Yiheyuan Road, Haidian District, Beijing 100871, China}
\email{gaston(at)bicmr(dot)pku(dot)edu(dot)cn}

\author{Tao Gui}
\address{(Tao Gui) \newline \indent Beijing International Center for Mathematical Research, Peking University, No.\ 5 Yiheyuan Road, Haidian District, Beijing 100871, China}
\email{guitao(at)amss(dot)ac(dot)cn}

\author{Hongsheng Hu}
\address{(Hongsheng Hu) \newline \indent School of Mathematics, Hunan University, Changsha 410082, China}
\email{huhongsheng(at)amss(dot)ac(dot)cn}

\begin{abstract}
For any root system of rank $r$, we study the ``dominant weight polytope'' $P^\lambda$ associated with a strongly dominant weight $\lambda$. We prove that $P^\lambda$ is combinatorially equivalent to the $r$-dimensional cube. As an application, we give a new proof of the known formulas for Betti numbers of Peterson varieties in classical Lie types.
\end{abstract}

\maketitle

\section{Introduction}

A \emph{weight polytope} is the convex hull of the weights that occur in some highest weight representation of a Lie algebra \cite{fulton2013representation,humphreys1972}. Weight polytopes are a recurrent object of study in representation theory of Lie algebras (see, for example, \cite{besson2024weight,gao2018generating,li2014cross,Panyushev97,Walton21}). In type $A$, a weight polytope is precisely the classical permutohedron $P_n\left(x_1, \ldots, x_n\right)$, that is, the convex hull of the $n!$ points obtained from $\left(x_1, \ldots, x_n\right)$ by permuting the coordinates. The face structure of weight polytopes was studied in \cite{maxwell1989wythoff} and \cite{renner2009descent}, while Postnikov computed the volume and the number of lattice points of weight polytopes in the seminal paper \cite{postnikov2009permutohedra}.

In this paper, we study the ``dominant weight polytopes'' for any root system. Let $\Phi$ be a root system of 
rank $r$ in the sense of \cite{humphreys1972} (that is, $\Phi$ is crystallographic, reduced, and finite) and let $E$ be the $r$-dimensional Euclidean space where $\Phi$ lives. 
The space $E$ is called \emph{weight space}, and points in $E$ are called \emph{weights}. 
Let $(-|-)\colon E \times E \to \mathbb{R}$ be the inner product on $E$.
For any root $\alpha \in \Phi$, we denote the corresponding coroot $\frac{2 \alpha}{(\alpha|\alpha)}$ by $\alpha\spcheck$. Then $\Phi\spcheck := \left\{\alpha\spcheck \in E \mid \alpha \in \Phi \right\}$ is the dual root system of $\Phi$.
We fix a set $\Delta = \left\{\alpha_i \mid i = 1,\dots, r\right\}$ of simple roots of $\Phi$.
Let
\begin{equation*}
    s_i\colon E \to E, \quad x \mapsto x - (x|\alpha_i \spcheck) \alpha_i, \quad i = 1, \dots, r,
\end{equation*}
be the simple reflections. The Weyl group $W_f:= \langle s_1 ,\dots, s_r \rangle$  acts on $E$ and preserves $(-|-)$. The \emph{dominant Weyl chamber} is the open cone in $E$ given by
\begin{equation*}
    C_+ := \{x \in E \mid (x|\alpha_i) > 0, \text{ for all } i = 1, \dots, r\}.
\end{equation*} 
A point $\lambda \in E$ is called \emph{strongly dominant} if $\lambda \in C_+$, and \emph{dominant} if $\lambda \in \overline{C_+}$, the closure of $C_+$. 
Given $\lambda \in \overline{C_+}$, we define the \emph{dominant weight polytope $P^\lambda$ associated with $\lambda$} by
\begin{equation*}
       P^\lambda := \operatorname{Conv} (W_f \lambda) \cap \overline{C_+} \subset E, 
\end{equation*}
where $\operatorname{Conv} (W_f \lambda)$ is the convex hull of the finite set $W_f \lambda$ in $E$. That is, $P^\lambda$ is the convex polytope obtained from intersecting the cone $\overline{C_+}$ with the convex set $\operatorname{Conv} (W_f \lambda)$. See Figure \ref{fig:A3points} for examples.

\begin{figure}[ht]
    \centering
    \begin{subfigure}{0.4\textwidth}
    \centering
    \includegraphics{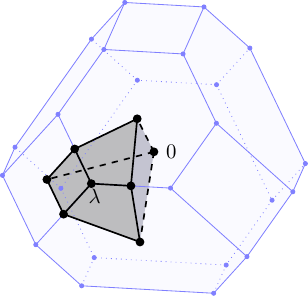}
        \caption{$\Phi$ of type $A_3$ and $\lambda=5\alpha_1+7\alpha_2+6\alpha_3.$}
        \label{fig:poly576A3}
    \end{subfigure}
    \qquad
    \begin{subfigure}{0.4\textwidth}
    \centering
    \includegraphics{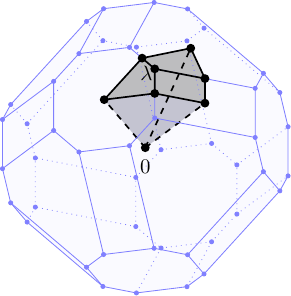}
        \caption{$\Phi$ of type $B_3$ and  $\lambda=5\alpha_1+7\alpha_2+8\alpha_3.$ }
        \label{fig:poly578B3}
    \end{subfigure}
    \caption{In each figure, $\lambda\in C_+$, and the blue polyhedron is $\operatorname{Conv} (W_f \lambda)$.
    The gray polyhedron is $P^{\lambda}$ and it is combinatorially equivalent to the $3$-dimensional cube. }
    \label{fig:A3points}
\end{figure}

As far as we know, the dominant weight polytope appeared implicitly for the first time in the seminal paper of Guillemin and Sternberg \cite{guillemin1982convexity}, and its applications have increased during recent years.
It plays an important role in actions and representations of reductive groups \cite{kaveh2015crystal, kk10, kaveh2012convex}, in the representation of complex simple Lie algebras \cite{boysal2024kostants}, and in the description of cohomology rings of spherical varieties \cite{kaveh11}. 
To investigate moduli stacks of pointed chains of $\mathbb{P}^1$ related to the Losev--Manin moduli spaces, Blume \cite{blume2015toric} studied the toric orbifold associated with (the normal fan of) a dominant weight polytope in types $A$, $B$, and $C$.
A special case of Horiguchi--Masuda--Shareshian--Song's results \cite{horiguchi2021toric} 
says that the rational cohomology ring of the toric orbifold associated with a dominant weight polytope in classical Lie types is isomorphic to the cohomology of the corresponding famous Peterson variety\footnote{Originally introduced by Peterson to describe the quantum cohomology rings of partial flag varieties \cite{peterson1997quantum}.}, see also \cite{abe2023peterson}.
Moreover, if $\lambda$ is in the coroot lattice $\mathbb{Z}\Phi^\vee$, then $P^\lambda$ is closely related to the ``dominant lower Bruhat interval'' corresponding to the translation by $\lambda$ and its asymptotic behavior in the affine Weyl group $\mathbb{Z}\Phi^\vee\rtimes W_f$ \cite{BGH-log-conc}. 

In combinatorics of convex polytopes, it is crucial to determine the face structure (or combinatorial type) of a polytope. 
However, it is usually difficult to do it. For example, very little is known about the face structure of a $d$-polytope for $d \geq 3$ \cite[p.\ 38--39]{grunbaum03}. 
In this paper, we prove that if $\lambda$ is strongly dominant, the face structure of $P^\lambda$ is equivalent to that of the $r$-dimensional cube (Theorem \ref{cube}). 
Topologically, this is equivalent to the existence of a (piecewise linear) homeomorphism between $P^\lambda$ and the cube, which restricts to homeomorphisms between their facets (and hence all the faces), see \cite[Section 2.2]{zbMATH00722614}.
In particular, the face structure of $P^\lambda$ when $\lambda\in C_+$ only depends on the rank $r$ of $\Phi$. 
Its $2^r$ vertices are in canonical one-to-one correspondence with subsets of $\Delta$. 
Furthermore, each vertex of $P^\lambda$ can be computed explicitly from the Cartan matrix of $\Phi$ (see Corollary \ref{cor-vertices}). 
We generalize these results to a bigger family of polytopes with almost identical proofs (Theorem \ref{thm-gen}).

After submitting the first preprint version of this work, Marc Besson communicated to us the existence of \cite{besson2021vertices}.
In their paper, a description of the vertices of $P^\lambda$, when $\lambda$ is dominant and integral, was given, which is similar to Remark \ref{rem-wall}.
Their work focuses on the extremal rays of the Kostka cone $\mathscr{K}:= \{(\lambda, \mu) \mid \lambda, \mu \text{ dominant, } \mu \in P^\lambda\}$ and the vertices of its slices, these slices are the $P^\lambda$'s (they call them intersection polytopes). See also Theorem 1.2 in \cite{boysal2024kostants}.
It is important to emphasize that the results in the present paper do not follow from the enumeration of the vertices nor the $f$-vector of $P^{\lambda}$ (see Remark \ref{non-cube}).

We want to conclude this introduction with a question. As we will see in Remark \ref{rem-wall}, if $\lambda$ lies on some walls of $C_+$, the structure of the polytope $P^\lambda$ is more complicated and interesting. See Figure \ref{fig:A3pointswall} for examples.

\begin{figure}[ht]
    \centering
    \begin{subfigure}{0.4\textwidth}
    \centering
    \includegraphics{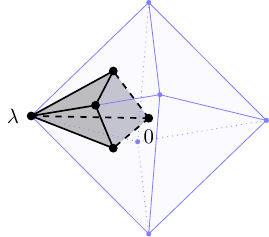}
        \caption{$\Phi$ of type $A_3$ and $\lambda=\alpha_1+2\alpha_2+\alpha_3.$}
        \label{fig:poly121A3}
    \end{subfigure}
    \qquad
    \begin{subfigure}{0.4\textwidth}
    \centering
    \includegraphics{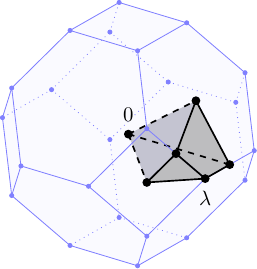}
        \caption{$\Phi$ of type $B_3$ and  $\lambda=2\alpha_1+3\alpha_2+3\alpha_3.$ }
        \label{fig:poly233B3}
    \end{subfigure}
    \caption{In each figure, $\lambda\in \overline{C_+}\setminus C_+$, and the blue polyhedron is $\operatorname{Conv} (W_f \lambda)$. 
    The gray polyhedron is $P^{\lambda}$. In this case, it has fewer vertices than the $3$-dimensional cube. }
    \label{fig:A3pointswall}
\end{figure}

\begin{question}
What is the face structure of $P^{\lambda}$ for $\lambda \in \overline{C_{+}} \backslash C_{+}$? How much does it depend on the root system $\Phi$ and the weight $\lambda$?
\end{question}

The remainder of the paper is organized as follows. In Section \ref{sec-main}, we study the dominant weight polytope and prove our main theorem (Theorem \ref{cube}). In Section \ref{sec-generalization}, we generalize these results to a broader family of polytopes. 
In Section \ref{Sec-Application}, we apply our results to obtain Betti numbers of Peterson varieties.

\subsection*{Acknowledgments}

The authors would like to thank Marc Besson, Michel Brion, Shiliang Gao, Yibo Gao, Mikiya Masuda, Connor Simpson, and Haozhi Zeng for valuable discussions.

\section{The dominant weight polytope} \label{sec-main}

For general notions and facts about convex polytopes, we refer to the standard textbooks \cite{grunbaum03, zbMATH00722614}.

We fix a dominant weight $\lambda \in \overline{C_+}$, and denote by $[r]$ the finite set $\{1, \dots, r\}$.
Firstly, we have the fact that the dominant weight polytope $P^\lambda$ is the intersection of two closed simplicial cones.

\begin{proposition}[{\cite[Proposition 4.2]{BGH-log-conc}}] \label{prop-cone}
We have $P^\lambda = \overline{C_+} \cap \overline{Q^\lambda}$, where $\overline{Q^\lambda}$ is the closure of the open simplicial cone 
\begin{equation*}
  Q^\lambda := \Bigl\{\lambda - \sum_{i \in [r]} c_i \alpha_i\spcheck \Bigm\vert c_i \in \mathbb{R}_{> 0} \Bigr\}.
\end{equation*}
\end{proposition}

\begin{proof}
    If $\mu \in \overline{C_+}$, then $\mu \in \operatorname{Conv}(W_f \lambda)$ if and only if $\lambda - \mu$ is a non-negative linear combination of simple coroots $\{\alpha_1\spcheck, \dots, \alpha_r\spcheck\}$ (as well as simple roots), see \cite[Proposition 8.44]{hall2015gtm222}.
    Therefore, $P^\lambda = \operatorname{Conv} (W_f \lambda) \cap \overline{C_+} = \overline{C_+} \cap \overline{Q^\lambda}$.
\end{proof}

Let $\varpi_1\spcheck, \dots, \varpi_r\spcheck \in E$ be the basis of fundamental coweights, which is dual to the basis of simple roots. In formulas, $(\varpi_i\spcheck | \alpha_j) = \delta_{i,j}$ where $\delta_{i,j}$ is the Kronecker delta.
For any subsets $I,J \subseteq [r]$, we define
\begin{align*}
  C_I & := \Bigl\{0 + \sum_{i\in I} a_i \varpi_i\spcheck \Bigm| a_i \in \mathbb{R}_{>0}\Bigr\}, \\
  Q^\lambda_J & := \Bigl\{\lambda - \sum_{i \in J} c_i \alpha_i\spcheck \Bigm| c_i \in \mathbb{R}_{>0} \Bigr\}, \\
  F_{I,J} & := C_I \cap Q^\lambda_J \text{ (possibly empty)}.
\end{align*}
The $F_{I,J}$'s depend on $\lambda$, but we omit it in the notation.
It is easy to see that $\bigl\{\overline{C_I} \bigm| I \subseteq [r] \bigr\}$ is the set of faces of $\overline{C_+}$, which contains $2^r$ elements.
Each $C_I$ has dimension $\lvert  I \rvert$ and is open in its closure.
In particular, $C_\emptyset = \{0\}$, $C_{[r]} = C_+$, and $\overline{C_+} = \bigsqcup_{I \subseteq [r]} C_I$ is the disjoint union of all the $C_I$'s.
Similarly, $\bigl\{\overline{Q^\lambda_J} \bigm| J \subseteq [r] \bigr\}$ is the set of faces of $\overline{Q^\lambda}$.
In particular, $Q^\lambda_\emptyset = \{\lambda\}$, and $Q^\lambda_{[r]} = Q^\lambda$.
Therefore, we have the following decomposition.

\begin{lemma} \label{lem-Plambda=union}
  The polytope $P^\lambda = \bigsqcup_{I,J \subseteq [r]} F_{I,J}$ is the disjoint union of all the $F_{I,J}$'s.
\end{lemma}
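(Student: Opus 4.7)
The plan is to reduce the statement directly to Proposition \ref{cor-polytope} together with the two disjoint-union decompositions of the cones $\overline{C_+}$ and $\overline{Q^\lambda}$ already noted in the paragraph preceding the lemma. Since Proposition \ref{cor-polytope} gives $P^\lambda = \overline{C_+} \cap \overline{Q^\lambda}$, the whole argument is essentially a set-theoretic manipulation once we know each of these cones decomposes as the disjoint union of its relatively open faces.

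First, I would verify the two partition statements that are asserted in the text. For $\overline{C_+}$, writing a point in the dual basis as $\sum_{i=1}^r a_i \varpi_i\spcheck$, the condition of lying in $\overline{C_+}$ is exactly $a_i \ge 0$ for all $i$; grouping points by the set $I := \{i : a_i > 0\}$ yields the disjoint union $\overline{C_+} = \bigsqcup_{I \subseteq [r]} C_I$. The analogous argument, now with the coordinates being the coefficients $c_i$ of the simple coroots in the expression $\lambda - \sum c_i \alpha_i\spcheck$, gives $\overline{Q^\lambda} = \bigsqcup_{J \subseteq [r]} Q^\lambda_J$. (The simple coroots form a basis of $E$, so the coefficients $c_i$ are unique.) Both decompositions are disjoint precisely because the index set records which coordinates are strictly positive.

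Next, I would intersect these two partitions. Using the distributivity of intersection over disjoint unions,
\begin{equation*}
P^\lambda \;=\; \overline{C_+} \cap \overline{Q^\lambda} \;=\; \Bigl(\bigsqcup_{I \subseteq [r]} C_I\Bigr) \cap \Bigl(\bigsqcup_{J \subseteq [r]} Q^\lambda_J\Bigr) \;=\; \bigsqcup_{I,J \subseteq [r]} (C_I \cap Q^\lambda_J) \;=\; \bigsqcup_{I,J \subseteq [r]} F_{I,J}.
\end{equation*}
The union on the right is disjoint because a point of $P^\lambda$ determines its $I$ from its $\varpi\spcheck$-coordinates and its $J$ from its $\alpha\spcheck$-coordinates, so it belongs to exactly one $F_{I,J}$ (possibly to none of them only when $F_{I,J}$ is empty, which we are already allowing).

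There is no real obstacle here; the only point that requires a moment of care is the uniqueness of the expansion coefficients used to define the index sets $I$ and $J$, which follows because $\{\varpi_1\spcheck, \dots, \varpi_r\spcheck\}$ and $\{\alpha_1\spcheck, \dots, \alpha_r\spcheck\}$ are each bases of $E$. Everything else is a direct rewriting using Proposition \ref{cor-polytope}.
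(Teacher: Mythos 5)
Your proposal is correct and follows essentially the same route as the paper: the paper's proof is precisely the one-line computation $P^\lambda = \overline{C_+} \cap \overline{Q^\lambda} = \bigl(\bigsqcup_I C_I\bigr) \cap \bigl(\bigsqcup_J Q^\lambda_J\bigr) = \bigsqcup_{I,J} F_{I,J}$, relying on the same two face decompositions stated before the lemma. Your extra verification of those decompositions (via uniqueness of coordinates in the bases $\{\varpi_i\spcheck\}$ and $\{\alpha_i\spcheck\}$) is a fine addition but not a different argument.
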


\begin{proof}
  $P^\lambda = \overline{C_+} \cap \overline{Q^\lambda} = \Bigl(\bigsqcup_{I \subseteq [r]} C_I \Bigr) \cap \Bigl(\bigsqcup_{J \subseteq [r]} Q^\lambda_J\Bigr) = \bigsqcup_{I, J \subseteq [r]} \bigl(C_I \cap Q^\lambda_J\bigr)= \bigsqcup_{I, J \subseteq [r]} F_{I,J}$.
\end{proof}

Note that $F_{I,J}$ is convex since $C_I$ and $Q^\lambda_J$ are convex.
It follows that the closure $\overline{F_{I,J}}$ is also convex.

\begin{lemma} \label{lem-points-between-points}
  Suppose $I_1, \dots, I_n, J_1, \dots, J_n \subseteq [r]$ are subsets of $[r]$, and $F_{I_k,J_k} \ne \emptyset$ for all  $k$. 
  Let $x_k \in F_{I_k,J_k}$ for $k=1, \dots, n$, and let $r_1, \dots, r_n \in \mathbb{R}_{>0}$ be arbitrary positive numbers such that $\sum_{k=1}^n r_k = 1$. Then 
\begin{equation*}
\sum_{k=1}^n r_k x_k \in F_{(\bigcup_k I_k), (\bigcup_k J_k)}.
\end{equation*}
  In particular, $F_{(\bigcup_k I_k), (\bigcup_k J_k)} \ne \emptyset$.
\end{lemma}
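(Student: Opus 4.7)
The proof should be a direct computation unpacking the definitions of $C_I$ and $Q^\lambda_J$, followed by collecting terms in a convex combination.

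My plan is to expand each $x_k$ in two ways. Since $x_k \in C_{I_k}$, I can write $x_k = \sum_{i \in I_k} a_i^{(k)} \varpi_i\spcheck$ with every $a_i^{(k)} > 0$. Since $x_k \in Q^\lambda_{J_k}$, I can also write $x_k = \lambda - \sum_{j \in J_k} c_j^{(k)} \alpha_j\spcheck$ with every $c_j^{(k)} > 0$. These two expansions will be handled separately to show membership in $C_{\bigcup_k I_k}$ and in $Q^\lambda_{\bigcup_k J_k}$ respectively.

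For the first expansion, I will interchange the order of summation to obtain
\begin{equation*}
  \sum_{k=1}^n r_k x_k = \sum_{i \in \bigcup_k I_k} \Bigl(\sum_{k \colon i \in I_k} r_k a_i^{(k)}\Bigr) \varpi_i\spcheck.
\end{equation*}
For each $i \in \bigcup_k I_k$, the inner sum is nonempty and every term is strictly positive (since each $r_k > 0$ and each $a_i^{(k)} > 0$), so the coefficient of $\varpi_i\spcheck$ is positive. Hence $\sum_k r_k x_k \in C_{\bigcup_k I_k}$.

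For the second expansion, I will use the crucial hypothesis $\sum_k r_k = 1$ to pull the $\lambda$-terms together, which gives
\begin{equation*}
  \sum_{k=1}^n r_k x_k = \lambda - \sum_{j \in \bigcup_k J_k} \Bigl( \sum_{k \colon j \in J_k} r_k c_j^{(k)} \Bigr) \alpha_j\spcheck,
\end{equation*}
and by the same positivity argument as before the inner coefficients are strictly positive exactly for $j \in \bigcup_k J_k$. Thus $\sum_k r_k x_k \in Q^\lambda_{\bigcup_k J_k}$, and combining with the previous step yields the membership in $F_{(\bigcup_k I_k),(\bigcup_k J_k)}$. The ``in particular'' statement is immediate from the existence of this explicit point. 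There is no real obstacle here; the lemma is essentially a bookkeeping exercise, and the only subtle point is remembering to use $\sum_k r_k = 1$ when combining the $\lambda$-terms in the $Q^\lambda$-expansion.
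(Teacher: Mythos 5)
Your proof is correct and follows essentially the same route as the paper: expand each $x_k$ both in the $\varpi_i\spcheck$-basis and in the form $\lambda - \sum c_j \alpha_j\spcheck$, take the convex combination, and observe that all collected coefficients are strictly positive for indices in the unions. You merely make explicit the bookkeeping (interchanging summations and invoking $\sum_k r_k = 1$) that the paper leaves implicit.
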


\begin{proof}
  For each $1\leq k \leq n$, we write
  \begin{equation*}
    x_k = \sum_{i \in I_k} a_{k,i} \varpi_i \spcheck = \lambda - \sum_{ j \in J_k} c_{k,j} \alpha_j\spcheck
  \end{equation*}
  where $a_{k,i}, c_{k,j} \in \mathbb{R}_{>0}$.
  Then we have
  \begin{alignat*}{3}
    & \sum_{1 \le k \le n} r_k x_k & & = \sum_{1 \le k \le n} \sum_{i \in I_k} r_k a_{k,i} \varpi_i \spcheck & & = \lambda - \sum_{1 \le k \le n} \sum_{j \in J_k} r_k c_{k,j} \alpha_j \spcheck \\
    & & & = \sum_{i \in \bigcup_k I_k} a_i \varpi_i \spcheck & & = \lambda - \sum_{j \in \bigcup_k J_k} c_j \alpha_j \spcheck
  \end{alignat*}
  for some $a_i, c_j \in \mathbb{R}_{>0}$. This concludes the proof.
\end{proof}
The point $\sum_{1 \le k \le n} r_k x_k$ from Lemma \ref{lem-points-between-points} lies in the convex hull of the points $x_1, \dots, x_n$.

Before digging into further details about the sets $F_{I,J}$, we need to introduce some notations.
Let $M = \bigl((\alpha_i|\alpha_j\spcheck)\bigr)_{i,j\in[r]}$ be the Cartan matrix of $\Phi$.
For any subset $J \subseteq [r]$, we denote by $M_J$ the submatrix $\bigl((\alpha_{i}|\alpha_{j}\spcheck)\bigr)_{i,j\in J}$ of $M$. In particular, $M_{[r]}=M$.
The matrix $M_J$ is the Cartan matrix of the root subsystem of $\Phi$ corresponding to $J$, which is invertible. For $\gamma \in \overline{C_+}$, we denote 
    \[p^\gamma_J := \gamma - \sum_{j \in J} c^\gamma_j \alpha_j\spcheck, \text{ where } (c^\gamma_j)_{j \in J} := M_J^{-1} \cdot \bigl( (\alpha_i | \gamma) \bigr)_{i \in J}.\]
Clearly, $(\alpha_j|p^\gamma_J) = 0$ for every $j \in J$. The following lemma is immediate.
\begin{lemma}\label{lem-x-in-CI}
Let $I\subset [r]$. Then $x\in C_I$ if and only if $(\alpha_i|x)>0$ for all $i\in I$ and $(\alpha_j|x)=0$ for all $j\notin I$.
\end{lemma}

Henceforth, we will focus on the case where $\lambda$ is strongly dominant.

\begin{lemma} \label{lem-single-point}
  Suppose $\lambda \in C_+$, and $I, J \subseteq [r]$, such that $[r] = I \sqcup J$ is a disjoint union.
  Then $F_{I,J} = \{p^\lambda_J\}$ is a singleton.
\end{lemma}

\begin{proof}
  If $F_{I,J} \ne \emptyset$ and  $x \in F_{I,J}$, then 
  \begin{equation*}
    x = \lambda - \sum_{j \in J} c_j \alpha_j\spcheck, \text{ where } c_j \in \mathbb{R}_{>0}.
  \end{equation*}
  By Lemma \ref{lem-x-in-CI}, for every $i\in J$, we have
  \begin{equation*}
    \sum_{j \in J} c_j (\alpha_i |\alpha_j\spcheck)= (\alpha_i | \lambda).
  \end{equation*}
  In other words, the vector $(c_j)_{j \in J}$ satisfies $M_J \cdot (c_j)_{j\in J} = \bigl( (\alpha_i | \lambda) \bigr)_{i \in J}$. Since $M_J$ is invertible, this implies $c_j=c^\lambda_j$ for all $j\in J$, that is, $x=p^\lambda_J$. 
  Therefore, $F_{I,J}$ has at most one single point.
  
  Conversely, we need to prove that $p^\lambda_J\in F_{I,J}$. 
  Since $\lambda$ is strongly dominant, we have $(\alpha_j | \lambda) > 0$ for all $j \in J$. 
  The inverse of any Cartan matrix has non-negative entries\footnote{A general statement holds: 
  If the off-diagonal entries of a positive-definite symmetric matrix $A$ are non-positive, then $A^{-1}$ has non-negative entries.}
  (see \cite[Exercise 13.8]{humphreys1972} or \cite[Reference Chapter, Section 2]{ov1990lie} for a list of all possible cases). 
  In particular, $M_J^{-1}$ is entry-wise non-negative.
  Therefore, $c^\lambda_j > 0$ for all $j \in J$, so $p^\lambda_J \in Q^\lambda_J$.
  For $i\in I$ and $j\in J$, we have $i\neq j$ so then $(\alpha_i|\alpha_j^\vee)\leq 0$. Therefore, for every $i\in I$, we have
  \begin{equation*}
    (\alpha_i|p^\lambda_J) = (\alpha_i | \lambda) - \sum_{j \in J} c^\lambda_j (\alpha_i | \alpha_j\spcheck) \geq (\alpha_i | \lambda) >0.
  \end{equation*}
  Recall that $(\alpha_j|p^\lambda_J) = 0$ for all $j \in J$. By Lemma \ref{lem-x-in-CI}, we have $p^\lambda_J\in C_I$.
\end{proof}

The following lemma determines when the set $F_{I,J}$ is nonempty.

\begin{lemma} \label{lem-FIJ-nonempty}
  Suppose $\lambda \in C_+$.
  For two subsets $I,J \subseteq [r]$, $F_{I,J} \ne \emptyset$ if and only if $I \cup J = [r]$.
\end{lemma}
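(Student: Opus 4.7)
The plan is to prove the two directions separately, using the two preceding lemmas for the nontrivial implication and a direct pairing argument for the necessary condition.

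For the ``only if'' direction, I would take $x \in F_{I,J}$ and suppose for contradiction that there exists $k \in [r]$ with $k \notin I$ and $k \notin J$. Writing $x = \sum_{i \in I} a_i \varpi_i\spcheck = \lambda - \sum_{j \in J} c_j \alpha_j\spcheck$ with all $a_i, c_j > 0$, I pair both expressions with $\alpha_k$. The first expression gives $(\alpha_k | x) = 0$ because $k \notin I$ and the $\varpi_i\spcheck$ are dual to the $\alpha_i$. On the other hand, the second expression gives
\begin{equation*}
  (\alpha_k | x) = (\alpha_k | \lambda) - \sum_{j \in J} c_j (\alpha_k | \alpha_j\spcheck).
\end{equation*}
The term $(\alpha_k | \lambda)$ is strictly positive since $\lambda$ is strongly dominant, and for each $j \in J$, $j \ne k$, the Cartan matrix entry $(\alpha_k | \alpha_j\spcheck)$ is nonpositive, so each $-c_j (\alpha_k|\alpha_j\spcheck) \ge 0$. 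This forces $(\alpha_k|x) > 0$, a contradiction. Hence $I \cup J = [r]$.

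For the ``if'' direction, I would bootstrap from Lemma~\ref{lem-single-point} via Lemma~\ref{lem-points-between-points}. Assume $I \cup J = [r]$, and set
\begin{equation*}
  I_1 := I, \quad J_1 := [r] \setminus I, \qquad I_2 := [r] \setminus J, \quad J_2 := J.
\end{equation*}
Both pairs $(I_1, J_1)$ and $(I_2, J_2)$ are disjoint unions partitioning $[r]$, so Lemma~\ref{lem-single-point} produces points $x_1 \in F_{I_1,J_1}$ and $x_2 \in F_{I_2, J_2}$. Applying Lemma~\ref{lem-points-between-points} with $r_1 = r_2 = 1/2$ gives $\tfrac{1}{2}(x_1 + x_2) \in F_{I_1 \cup I_2,\; J_1 \cup J_2}$. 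Using the hypothesis $I \cup J = [r]$, we have $[r] \setminus J \subseteq I$ and $[r] \setminus I \subseteq J$, so $I_1 \cup I_2 = I$ and $J_1 \cup J_2 = J$. Therefore $F_{I,J} \ne \emptyset$.

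The only subtle point is the ``only if'' direction, where one must invoke both the nonpositivity of off-diagonal Cartan entries and the strong dominance of $\lambda$; the ``if'' direction is essentially a bookkeeping consequence of the two earlier lemmas once one notices the correct complementary splittings of $[r]$.
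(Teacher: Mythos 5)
Your proof is correct and follows essentially the same route as the paper: the same pairing with $\alpha_k$ (using strong dominance and nonpositive off-diagonal Cartan entries) for necessity, and the same two complementary-partition points combined via Lemma~\ref{lem-points-between-points} for sufficiency — your $F_{I,[r]\setminus I}$ and $F_{[r]\setminus J, J}$ are exactly the paper's $F_{I,J_0}$ and $F_{I_0,J}$. No gaps; your version merely makes explicit the nonpositivity of the Cartan entries that the paper leaves implicit in its inequality.
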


\begin{proof}
  Suppose $x \in F_{I,J} \ne \emptyset$. As before, we write 
  \begin{equation*}
    x = \lambda - \sum_{j \in J} c_j \alpha_j\spcheck, \text{ where } c_j \in \mathbb{R}_{>0}.
  \end{equation*} 
  If $i \notin J$, we have
  \begin{equation*}
    (x | \alpha_i) = (\lambda | \alpha_i) - \sum_{j \in J} c_j (\alpha_j\spcheck | \alpha_k) \ge (\lambda | \alpha_i) > 0.
  \end{equation*}
  By Lemma \ref{lem-x-in-CI}, we have $i\in I$. Therefore $I \cup J = [r]$.

  Conversely, suppose $I \cup J = [r]$. Let $I_0 = I \setminus (I \cap J)$ and $J_0 = J \setminus (I\cap J)$.
  Then $[r] = I_0 \sqcup (I\cap J) \sqcup J_0$ is a disjoint union.
  By Lemma \ref{lem-single-point}, both of the sets $F_{I_0,J}$ and $F_{I,J_0}$ have a single point.
  Then by Lemma \ref{lem-points-between-points}, we have $F_{I,J} = F_{(I_0 \cup I), (J \cup J_0)} \ne \emptyset$.
\end{proof}

The following proposition describes the closure relation of the $F_{I,J}$'s.
\begin{proposition} \label{prop-face-closure-Plambda}
  Suppose $\lambda \in C_+$.
  For two subsets $I, J \subseteq [r]$, we have 
\begin{equation*}
\overline{F_{I,J}} = \bigsqcup_{\substack{I^\prime \subseteq I\\ J^\prime \subseteq J}} F_{I^\prime, J^\prime}.
\end{equation*}
  In particular, if $I, I^\prime, J, J^\prime \subseteq [r]$, then $F_{I^\prime, J^\prime} \subseteq \overline{F_{I,J}} $ if and only if $I^\prime \subseteq I$ and $J^\prime \subseteq J$.
\end{proposition}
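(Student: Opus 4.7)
The plan is to establish the two inclusions in the decomposition and then derive the ``in particular'' clause from the disjointness asserted in Lemma \ref{lem-Plambda=union}.

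For the inclusion $\overline{F_{I,J}} \subseteq \bigsqcup_{I' \subseteq I, J' \subseteq J} F_{I', J'}$, I would exploit the standard face decompositions $\overline{C_I} = \bigsqcup_{I' \subseteq I} C_{I'}$ and $\overline{Q^\lambda_J} = \bigsqcup_{J' \subseteq J} Q^\lambda_{J'}$ of these closed simplicial cones (these are direct analogues of $\overline{C_+} = \bigsqcup_{I \subseteq [r]} C_I$, noted right after the definitions). Since $F_{I,J} \subseteq \overline{C_I} \cap \overline{Q^\lambda_J}$, taking closures gives
\begin{equation*}
\overline{F_{I,J}} \subseteq \overline{C_I} \cap \overline{Q^\lambda_J} = \bigsqcup_{I' \subseteq I,\; J' \subseteq J} \bigl(C_{I'} \cap Q^\lambda_{J'}\bigr) = \bigsqcup_{I' \subseteq I,\; J' \subseteq J} F_{I', J'}.
\end{equation*}

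For the reverse inclusion, I would first dispose of the degenerate case: if $F_{I,J} = \emptyset$, then Lemma \ref{lem-FIJ-nonempty} gives $I \cup J \ne [r]$, which forces $I' \cup J' \ne [r]$ for every $I' \subseteq I$ and $J' \subseteq J$, so all terms on the right are empty and there is nothing to prove. Otherwise, fix a reference point $y \in F_{I,J}$. Given a nonempty $F_{I', J'}$ with $I' \subseteq I$ and $J' \subseteq J$ and any $x \in F_{I', J'}$, I would invoke Lemma \ref{lem-points-between-points} with $n = 2$, weights $r_1 = 1-t$ and $r_2 = t$ for $t \in (0,1)$, and points $x_1 = x$, $x_2 = y$. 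This yields $(1-t)x + ty \in F_{I' \cup I,\, J' \cup J} = F_{I,J}$ for every $t \in (0,1)$, and letting $t \to 0^+$ shows $x \in \overline{F_{I,J}}$.

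The ``in particular'' statement then follows at once. The ``if'' direction is the disjoint union just proved. For the ``only if'' direction, if $F_{I',J'}$ is nonempty and contained in $\overline{F_{I,J}}$, any $x \in F_{I',J'}$ lies in exactly one piece of the decomposition of $P^\lambda$ from Lemma \ref{lem-Plambda=union}, and by the equality above that piece must satisfy $I' \subseteq I$ and $J' \subseteq J$. I do not foresee any serious obstacle: the first inclusion is purely formal from face decompositions of the two ambient cones, while the reverse is carried by Lemma \ref{lem-points-between-points}, which is tailor-made for deforming a point of a ``smaller'' cell $F_{I',J'}$ along a straight line into $F_{I,J}$.
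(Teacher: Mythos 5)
Your proposal is correct and follows essentially the same route as the paper: the forward inclusion via the face decompositions of $\overline{C_I}$ and $\overline{Q^\lambda_J}$, and the reverse inclusion by fixing $y \in F_{I,J}$, pushing $(1-t)x+ty$ into $F_{I,J}$ via Lemma \ref{lem-points-between-points}, and letting $t \to 0^+$. Your explicit treatment of the empty case and of the ``in particular'' clause is slightly more detailed than the paper's (which simply assumes $I \cup J = [r]$), but the argument is the same.
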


\begin{proof}
  Notice that for any $I, J \subseteq [r]$, we have,
  \begin{align*}
    \overline{C_I} & = \Bigl\{\sum_{i \in I} a_i \varpi_i\spcheck \Bigm| a_i \in \mathbb{R}_{\ge 0} \Bigr\} = \bigsqcup_{I^\prime \subseteq I} C_{I^\prime}, \\
    \overline{Q^\lambda_J} & = \Bigl\{ \lambda - \sum_{j \in J} c_j \alpha_j\spcheck \Bigm| c_j \in \mathbb{R}_{\ge 0} \Bigr\} = \bigsqcup_{J^\prime \subseteq J} Q^\lambda_{J^\prime}.
  \end{align*}
  So 
  \begin{equation*}
    \overline{F_{I,J}} = \overline{C_I \cap Q^\lambda_J} \subseteq \overline{C_I} \cap \overline{Q^\lambda_J} = \bigsqcup_{\substack{I^\prime \subseteq I\\ J^\prime\subseteq J}} F_{I^\prime,J^\prime}.
  \end{equation*}
  This proves ``$\subseteq$''.

  Conversely, let $x \in F_{I^\prime, J^\prime}$. 
  By Lemma \ref{lem-FIJ-nonempty}, we have $I^\prime \cup J^\prime = [r]$. This implies $I \cup J = [r]$. By Lemma \ref{lem-FIJ-nonempty} again, we have $F_{I,J} \ne \emptyset$. We choose an arbitrary point $y \in F_{I,J}$.
  By Lemma \ref{lem-points-between-points}, we have $ax + (1-a) y \in F_{I,J}$ for $0 < a < 1$.
  Therefore,
  \begin{equation*}
    x = \lim_{a \to 1^-} \bigl(ax + (1-a)y\bigr) \in \overline{F_{I, J}}.
  \end{equation*}
  This proves ``$\supseteq$''.
\end{proof}

For $1\leq k \leq r$, we define the following affine hyperplanes in $E$:
\begin{align*}
  H_k & := \Bigl\{\sum_{i \in [r] \setminus \{k\}} a_i \varpi_i\spcheck \Bigm| a_i \in \mathbb{R} \Bigr\}, \\
  H_k^\lambda & := \Bigl\{\lambda - \sum_{j \in [r] \setminus \{k\}} c_j \alpha_j\spcheck \Bigm| c_j \in \mathbb{R} \Bigr\}.
\end{align*}
We also define the open half-spaces (whose closures are called closed half-spaces) in $E$:
\begin{align*}
  H_{k,+} & := \Bigl\{\sum_{1 \le i \le r} a_i \varpi_i\spcheck \Bigm| a_i \in \mathbb{R}\text{ for all } i, \text{ and } a_k > 0 \Bigr\}, \\
  H_{k,+}^\lambda & := \Bigl\{\lambda - \sum_{1 \le j \le r} c_j \alpha_j\spcheck \Bigm| c_j \in \mathbb{R}\text{ for all } j, \text{ and } c_k > 0 \Bigr\}.
\end{align*}
Clearly, we have $\overline{C_+} = \bigcap_{1 \le k \le r} \overline{H_{k,+}}$ and $\overline{Q^\lambda} = \bigcap_{1 \le k \le r} \overline{H_{k,+}^\lambda}$, and thus
\begin{equation*}
  P^\lambda = \overline{C_+} \cap \overline{Q^\lambda} = \Bigl(\bigcap_{1 \le k \le r} \overline{H_{k,+}} \Bigr) \cap \Bigl(\bigcap_{1 \le k \le r} \overline{H_{k,+}^\lambda}\Bigr)
\end{equation*}
is the intersection of the closed half-spaces.

We have the following description of the faces of $P^\lambda$.

\begin{theorem} \label{thm-all-faces-of-Plambda}
Let $\lambda \in C_+$ be strongly dominant.
Then, the set $\{\overline{F_{I,J}} \mid I, J \subseteq [r], I \cup J = [r]\}$ is the set of all faces of $P^\lambda$. Furthermore, if $I \cup J  = [r]$, then $F_{I,J}$ is a real manifold of dimension $\lvert I \rvert + \lvert J \rvert - r$.
\end{theorem}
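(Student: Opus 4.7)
The plan is to handle the two assertions in turn. For the face enumeration, I will identify the equivalence classes of $\sim$ with the non-empty sets $F_{I,J}$, so that taking closures and appealing to Lemma \ref{lem-FIJ-nonempty} yields the claimed list $\bigl\{\overline{F_{I,J}} \bigm\vert I \cup J = [r]\bigr\}$. For the dimension assertion, I will exhibit $F_{I,J}$ as a non-empty relatively open subset of an affine subspace of dimension $\lvert I \rvert + \lvert J \rvert - r$, from which the manifold claim is immediate.

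For the first assertion, the key point is that each $x \in P^\lambda$ has two canonical expansions: $x = \sum_{k=1}^r a_k(x)\, \varpi_k\spcheck$ in the fundamental-coweight basis, with $a_k(x) = (x \mid \alpha_k) \ge 0$ because $x \in \overline{C_+}$; and $\lambda - x = \sum_{k=1}^r c_k(x)\, \alpha_k\spcheck$ in the simple-coroot basis, with $c_k(x) \ge 0$ because $x \in \overline{Q^\lambda}$. Setting $I_x := \{k : a_k(x) > 0\}$ and $J_x := \{k : c_k(x) > 0\}$, the definitions of the hyperplanes and half-spaces immediately give: $x \in H_{k,+}$ iff $k \in I_x$, otherwise $x \in H_k$; and analogously for $H_{k,+}^\lambda$ versus $H_k^\lambda$ using $J_x$. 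Hence $x \sim y$ iff $(I_x, J_x) = (I_y, J_y)$, so the equivalence class of $x$ is precisely $F_{I_x, J_x}$. By Lemma \ref{lem-FIJ-nonempty} the non-empty such $F_{I,J}$ are exactly those with $I \cup J = [r]$, and taking closures then gives the full list of faces.

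For the dimension assertion, let $V_I := \operatorname{span}\{\varpi_i\spcheck : i \in I\}$ and $W_J := \operatorname{span}\{\alpha_j\spcheck : j \in J\}$. Since $C_I$ is relatively open in $V_I$ and $Q^\lambda_J$ is relatively open in $\lambda + W_J$, the intersection $F_{I,J}$ is relatively open in the affine subspace $V_I \cap (\lambda + W_J)$. The non-emptiness of $F_{I,J}$ together with the standard identity $\dim(V_I \cap W_J) = \dim V_I + \dim W_J - \dim(V_I + W_J)$ reduces the dimension count to showing $V_I + W_J = E$ whenever $I \cup J = [r]$. This is the main obstacle and the only non-routine step. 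I will verify it by showing $V_I^\perp \cap W_J^\perp = 0$: take $v$ in this intersection, expand $v = \sum_k v_k \alpha_k$ in the simple-root basis; the pairing $(v \mid \varpi_i\spcheck) = v_i$ forces $v_i = 0$ for $i \in I$, so $v$ is supported on $[r] \setminus I \subseteq J$. The remaining relations $(v \mid \alpha_j\spcheck) = 0$ for $j \in [r] \setminus I$ then form a square homogeneous linear system whose coefficient matrix is (the transpose of) the Cartan submatrix $M_{[r] \setminus I}$, which is invertible by the fact already invoked in the proof of Lemma \ref{lem-single-point}. Hence $v = 0$, giving $V_I + W_J = E$, $\dim F_{I,J} = \lvert I \rvert + \lvert J \rvert - r$, and $F_{I,J}$ is a real manifold of that dimension.
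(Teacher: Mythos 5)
Your proof is correct and follows essentially the same route as the paper: identifying the $\sim$-equivalence classes with the nonempty $F_{I,J}$ via the two coefficient expansions (in fundamental coweights and in simple coroots), and then exhibiting $F_{I,J}$ as a nonempty relatively open subset of the intersection of the affine subspaces $\operatorname{span}\{\varpi_i\spcheck \mid i \in I\}$ and $\lambda + \operatorname{span}\{\alpha_j\spcheck \mid j \in J\}$. The only difference is cosmetic: you justify $V_I + W_J = E$ via orthogonal complements and the invertibility of the Cartan submatrix, whereas the paper directly asserts that $\{\varpi_i\spcheck \mid i \in I\} \cup \{\alpha_j\spcheck \mid j \in J \setminus (I \cap J)\}$ is a basis of $E$ and invokes transversality.
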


\begin{proof}
For any two points $x,y \in P^\lambda$, we write $x \sim y$ if for every $k = 1, \dots, r$, either $x,y \in H_k$ or $x,y \in H_{k,+}$, and either $x,y \in H^\lambda_k$ or $x,y \in H^\lambda_{k,+}$.
Clearly, ``$\sim$'' is an equivalence relation on $P^\lambda$. A face of $P^\lambda$ is nothing but the closure of an equivalence class.

  By Lemmas \ref{lem-Plambda=union} and \ref{lem-FIJ-nonempty}, we have
  \begin{equation*}
    P^\lambda = \bigsqcup_{I,J \subseteq [r]} F_{I,J} = \bigsqcup_{I,J \subseteq [r], I \cup J = [r]} F_{I,J}.
  \end{equation*}
  The first part of the theorem follows from the fact that for two points $x \in F_{I,J}$ and $y \in F_{I^\prime, J^\prime}$, we have $x \sim y$ if and only if $I = I^\prime$ and $J = J^\prime$.

  For the second part of the theorem, consider the $\lvert I \rvert$-dimensional affine subspace
  \begin{equation*}
    X := \bigcap_{k \in [r] \setminus I} H_k = \Bigl\{\sum_{i \in I} a_i \varpi_i\spcheck \Bigm| a_i \in \mathbb{R} \text{ for all } i \in I \Bigr\},
  \end{equation*}
  and the $\lvert J \rvert$-dimensional  affine subspace
  \begin{equation*}
    Y := \bigcap_{j \in [r] \setminus J} H_j^\lambda = \Bigl\{\lambda - \sum_{j \in J} c_j \alpha_j\spcheck  \Bigm| c_j \in \mathbb{R} \text{ for all } j \in J \Bigr\}.
  \end{equation*}
  Since $I \cup J = [r]$, by Lemma \ref{lem-FIJ-nonempty}, the intersection $X \cap Y$ is a nonempty affine subspace.
  The vectors $\{\varpi_i \spcheck \mid i\in I\}$ are parallel to $X$ and the vectors $\{\alpha_j\spcheck \mid j \in J \setminus (I \cap J)\}$ are parallel to $Y$. Furthermore, the union $\{\varpi_i \spcheck, \alpha_j\spcheck \mid i \in I, j \in J \setminus (I \cap J)\}$ is a basis of $E$.
  Then, $X$ intersects with $Y$ transversally, and hence $\dim (X \cap Y) = \dim X + \dim Y - r = \lvert I \rvert + \lvert J \rvert - r$.
  Notice that $C_I$ is an open cone in $X$, and $Q^\lambda_J$ is an open cone in $Y$.  
  Therefore, the intersection $F_{I,J} = C_I \cap Q^\lambda_J$ (which is nonempty) is an open subset in the affine subspace $X \cap Y$, and hence a real manifold of dimension $\lvert I \rvert + \lvert J \rvert - r$.
\end{proof}

For a polytope $P$, there is a partial order $\le$ on the set of faces of $P$ defined by $F \le F^\prime$ if $F \subseteq F^\prime$.
A polytope $P$ of dimension $r$ is called \emph{combinatorially equivalent} to the $r$-dimensional cube if the set of faces of $P$ is isomorphic to the set of faces of the standard cube $[0,1]^{\times r} \subset \mathbb{R}^r$ as partially ordered sets.

\begin{theorem} \label{cube}
Let $\lambda \in C_+$ be strongly dominant.
The polytope $P^\lambda$ is combinatorially equivalent to the $r$-dimensional cube.
\end{theorem}

\begin{proof}
  A face $H$ of the $r$-dimensional cube $[0,1]^{\times r}$ in the Euclidean space $\mathbb{R}^r$ is uniquely determined by a partition $[r] = I_0 \sqcup I_1 \sqcup I_{01}$ in the following way
  \begin{equation*}
      H = H(I_0,I_1,I_{01}) := \left\{ (x_1,\dots, x_r) \in \mathbb{R}^r \middle| 
       \begin{gathered}
           \text{$x_i = 0$ if  $i \in I_0$,} \\
           \text{$x_j =1$ if  $j \in I_1$,} \\
           \text{$x_k \in [0,1]$ if $k \in I_{01}$}
       \end{gathered}\right\}.
  \end{equation*}
  Suppose $H^\prime = H^\prime(I^\prime_0, I^\prime_1, I^\prime_{01})$ is another face of the cube corresponding to the partition $[r] = I_0^\prime \sqcup I_1^\prime \sqcup I_{01}^\prime$.
  Then, $H \subseteq H^\prime$ if and only if $I_0^\prime \subseteq I_0$ and $I_1^\prime \subseteq I_1$.

  We define a map $\theta$ from the set of faces of the cube $[0,1]^{\times r}$ to the set of faces of the polytope $P^\lambda$ by
  \begin{equation*}
    \theta\colon H(I_0,I_1,I_{01}) \mapsto \overline{F_{(I_0 \cup I_{01}), (I_1 \cup I_{01})}}.
  \end{equation*}
  Then, by Theorem \ref{thm-all-faces-of-Plambda}, this map is well defined and is a bijection with inverse
  \begin{equation*}
    \theta^{-1}\colon \overline{F_{I,J}} \mapsto H \bigl(I \setminus (I \cap J), J \setminus (I \cap J), I\cap J \bigr).
  \end{equation*}
  Therefore, to show that $P^\lambda$ is combinatorially equivalent to the cube, it suffices to show that for any two faces $H$ and $H^\prime$ of the cube, 
  \[\text{$H \subseteq H^\prime$ if and only if $\theta(H) \subseteq \theta(H^\prime)$.}\]
  By Proposition \ref{prop-face-closure-Plambda}, this is equivalent to saying that for any two partitions $[r] = I_0 \sqcup I_1 \sqcup I_{01} = I_0^\prime \sqcup I_1^\prime \sqcup I_{01}^\prime$ we have the following equivalence,
  \begin{equation*} 
    \text{$I_0^\prime \subseteq I_0$ and $I_1^\prime \subseteq I_1$  if and only if $I_0 \cup I_{01} \subseteq I_0^\prime \cup I_{01}^\prime$ and $I_1 \cup I_{01} \subseteq I_1^\prime \cup I_{01}^\prime$.}
  \end{equation*}
  But this is clear since $I_0, I_1, I_0'$ and $I_1'$ are the complements of $I_1 \cup I_{01}$, $I_0 \cup I_{01}$, $I_1' \cup I_{01}'$ and $I_0' \cup I_{01}'$, respectively.
\end{proof}

\begin{remark}\label{non-cube}
    The intersection of two closed simplicial cones might be a bounded $r$-dimensional polytope with the same $f$-vector as the $r$-dimensional cube---that is, for every $i$, it contains precisely $\binom{r}{i}2^{r-i}$ faces of dimension $i$---and yet be not combinatorially equivalent to the cube. For instance, Figure \ref{figure-not-cube} shows a polyhedron, which is the intersection of two simplicial cones in $\mathbb{R}^3$. It consists of $8$ vertices, $12$ edges, and $6$ faces, but it is not combinatorially equivalent to the cube since it contains one face with $3$ vertices and another face with $5$ vertices.
\end{remark}
    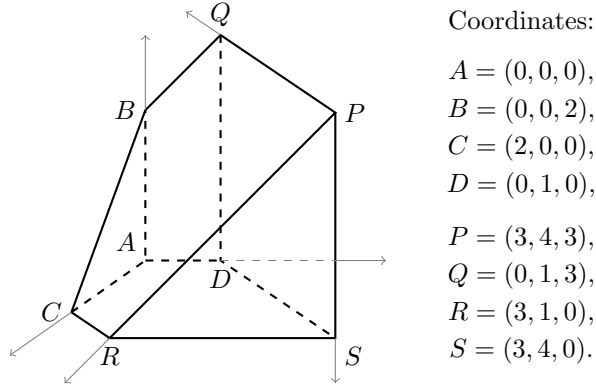
\begin{figure}[ht]
      \centering
      \begin{tikzpicture}
        \coordinate [label=150:$A$] (A) at (0,0);
        \coordinate [label=left:$B$] (B) at (0,2);
        \coordinate [label=left:$C$] (C) at (215:1.2);
        \coordinate [label=below:$D$] (D) at (1,0);

        \coordinate [label=right:$P$] (P) at ($(0,3) +  (215:1.8) + (4,0)$);
        \coordinate [label=above:$Q$] (Q) at (1,3);
        \coordinate [label=below:$R$] (R) at ($(215:1.8) + (1,0)$);
        \coordinate [label=315:$S$] (S) at ($(R) + (3,0)$);
        
        \draw [dashed, thick] (C) -- (A) -- (B);
        \draw [dashed, thick] (Q) -- (D) -- (S);
        \draw [dashed, thick] (A) -- (D);
        \draw [thick] (R) -- (C) -- (B) -- (Q) -- (P) -- (R) -- (S) -- (P);
        \draw [->, gray] (B) -- (0,3);
        \draw [->, gray] (C) -- (215:2.2);
        \draw [gray, dashed] (D) -- (2.5,0);
        \draw [->, gray] (2.5,0) -- (3.2,0);
        \draw [->, gray] (Q) -- ($1.3*(Q) - 0.3*(P)$);
        \draw [->, gray] (R) -- ($1.2*(R) - 0.2*(P)$);
        \draw [->, gray] (S) -- ($1.2*(S) - 0.2*(P)$);

        \node at (5,3.2) {Coordinates:};
        \node at (5,2.5) {$A = (0,0,0)$,};
        \node at (5,2) {$B = (0,0,2)$,};
        \node at (5,1.5) {$C = (2,0,0)$,};
        \node at (5,1) {$D = (0,1,0)$,};
        \node at (5,0.3) {$P = (3,4,3)$,};
        \node at (5,-0.2) {$Q = (0,1,3)$,};
        \node at (5,-0.7) {$R = (3,1,0)$,};
        \node at (5,-1.2) {$S = (3,4,0)$.};
      \end{tikzpicture}
      \caption{Let $\mathcal{C}_1$ be the simplicial cone with vertex $A$ and extremal rays $AB$, $AC$, $AD$, and let $\mathcal{C}_2$ be the simplicial cone with vertex $P$ and extremal rays $PQ$, $PR$, $PS$. The intersection $\mathcal{C}_1 \cap \mathcal{C}_2$ is a convex polytope with $8$ vertices ($A,B,C,D,P,Q,R,S$), $12$ edges and $6$ faces. However, this is not combinatorially equivalent to a cube since it has some triangles and pentagons as faces.}\label{figure-not-cube}
    \end{figure}

Note that for any partition $[r] = I \sqcup J$, the point in $F_{I,J}$ is a $0$-dimensional face of $P^\lambda$, in other words, a vertex.
As a consequence, we have the following minimal vertex representation of $P^\lambda$, and we can explicitly compute its $2^r$ vertices using the Cartan matrix of the root system as demonstrated in the proof of Lemma \ref{lem-single-point}.

\begin{corollary} \label{cor-vertices}
Let $\lambda \in C_+$ be strongly dominant. 
The polytope $P^\lambda$ is the convex hull of the $2^r$ vertices 
    \begin{equation*}
        \bigl\{F_{I,J} \bigm| [r] = I \sqcup J\bigr\}=\{p^\lambda_J \mid J \subseteq [r] \}.
    \end{equation*}
\end{corollary}

\begin{remark} \label{rem-wall}
The identity $P^\lambda = \operatorname{Conv} \{p^\lambda_J \mid J \subseteq [r]\}$ from Corollary \ref{cor-vertices}
still holds for $\lambda\in\overline{C_+}$. 
This is because $P^\lambda$ and the $p^\lambda_J$'s depend continuously on $\lambda$. 
However, if $\lambda\notin C_+$, we have $p^\lambda_J = p^\lambda_{J^\prime}$ for some subsets $J,J^\prime \subseteq [r]$. In particular, if $\lambda\notin C_+$, then $P^\lambda$ is not combinatorially equivalent to the $r$-dimensional cube.
\end{remark}

Recall that the \emph{Minkowski sum} of two subsets $A$ and $B$ of a Euclidean space is defined by $A+B:=\{\boldsymbol{a}+\boldsymbol{b} \mid \boldsymbol{a} \in A, \boldsymbol{b} \in B\}$.
If $A$ and $B$ are convex polytopes, then so is $A+B$. The following proposition relates different dominant weight polytopes. Although this is briefly mentioned in \cite{besson2021vertices}, we provide an elementary proof.

\begin{proposition} \label{Prop-Minkowski}
    Let $\lambda, \mu \in \overline{C_+}$ be dominant.
    Then, $P^{\lambda+\mu}$ is the Minkowski sum of $P^\lambda$ and $P^\mu$.
\end{proposition}

\begin{proof}
    By Proposition \ref{prop-cone}, it is not hard to see that the Minkowski sum of $P^\lambda$ and $P^\mu$ satisfies the inequalities
    \begin{equation*}
    (x|\alpha_i) \geq 0, \text{ and } 
    (x|\varpi_i) \leq (\lambda|\varpi_i)+(\mu|\varpi_i), \text{ for all } i = 1, \dots, r,
    \end{equation*}
    which are the same inequalities defining $P^{\lambda+\mu}$.
    Thus, $P^\lambda + P^\mu \subseteq P^{\lambda+\mu}$.

    Conversely, by Remark \ref{rem-wall}, 
    we have $P^{\gamma} = \operatorname{Conv}\{p^{\gamma}_J \mid J \subseteq [r]\}$ for $\gamma \in \{\lambda, \mu\}$.
    Moreover, we have $p^{\lambda+\mu}_J=p^\lambda_J + p^\mu_J \in P^\lambda + P^\mu$.
    Since the Minkowski sum of two convex sets is also convex, we have $P^{\lambda + \mu} = \operatorname{Conv}\{p^{\lambda + \mu}_J \mid J \subseteq [r]\} \subseteq P^\lambda + P^\mu$. 
\end{proof}

\section{A generalization of the dominant weight polytope} \label{sec-generalization}

The same arguments apply to the following generalization of the results above.

\begin{theorem} \label{thm-gen}
    Let $\{v_1, \dots, v_r\}$ be a basis for an $r$-dimensional Euclidean space $E$ such that $(v_i | v_j) \le 0$ for distinct $i, j$. 
    Let $\lambda \in \overline{C_+}$ be a point, where
    \[C_+ := \{x \in E \mid (x | v_i) > 0 \text{ for all } i \in [r]\}.\]
    We define $P^\lambda :=\overline{C_+} \cap \overline{Q^\lambda}$, where 
    \[Q^\lambda := \Bigl\{\lambda - \sum_{i \in [r]} c_i v_i \Bigm | c_i > 0 \text{ for all } i \Bigr\}.\]
    Then, we have the following statements:
    \begin{enumerate}
        \item The set $P^\lambda$ is a bounded convex polytope.
        \item If $\lambda \in C_+$, then $P^\lambda$ is combinatorially equivalent to the $r$-dimensional cube, whose $2^r$ vertices are
        \[\Bigl\{ \lambda - \sum_{j \in J} c^\lambda_j v_j \Bigm | J \subseteq [r] \Bigr\},\]
        where $(c^\lambda_j)_{j \in J} := M_J^{-1} \cdot \bigl( (v_i | \lambda) \bigr)_{i \in J}$ and $M_J := \bigl((v_i | v_j)\bigr)_{i,j \in J}$ is the submatrix of the Gram matrix of the basis $\{v_1, \dots, v_r\}$.
        \item If $\lambda, \mu \in \overline{C_+}$, then $P^{\lambda+ \mu}$ is the Minkowski sum of $P^\lambda$ and $P^\mu$.
    \end{enumerate}
\end{theorem}

\begin{proof}
    Here, we only prove the boundedness of $P^\lambda$. 
    The proofs of the other assertions are the same as in Section \ref{sec-main}.

    Let $\{u_1, \dots, u_r\}$ be the basis of $E$ dual to $\{v_1, \dots, v_r\}$, that is, $(u_i | v_j) = \delta_{ij}$.
    Let $\rho := u_1 + \dots + u_r$.
    Then, $\overline{Q^\lambda}$ is contained in the half-space
    \[S := \{x \in E \mid (\rho | x) \le (\rho | \lambda)\}.\]
    Note that 
    \[\overline{C_+} = \Bigl\{ \sum_{i \in [r]} a_i u_i \Bigm | a_i \ge 0 \Bigr\},\] 
    and by \cite[Exercise 13.7]{humphreys1972}, we have $(u_i | u_j) \ge 0$ for all $i, j = 1, \dots, r$. Therefore, for a point $x = \sum a_i u_i \in \overline{C_+}$, the inequality $( \rho | x) \le (\rho | \lambda)$ 
    implies $ a_i (u_i| u_i) \le (\rho | \lambda)$. Hence, we have
    \[\overline{C_+} \cap \overline{Q^\lambda} \subseteq \overline{C_+} \cap S \subseteq \Bigl\{ \sum_{i \in [r]} a_i u_i \Bigm | A \ge a_i \ge 0 \Bigr\}\]
    for some $A \in \mathbb{R}_+$. Note that the rightmost set is bounded.
\end{proof}

\begin{remark}
    The assumption $(v_i | v_j) \le 0$ in Theorem \ref{thm-gen} 
    cannot be dropped. For instance, the $P^\lambda$ in Figure \ref{fig-dim2-noncube} is a 2-dimensional polytope not combinatorially equivalent to the square. 
\end{remark}

\begin{figure}[ht]
    \centering
    \begin{tikzpicture}
        \draw [->, gray] (0,0) -- (4,0);
        \draw [->, gray] (0,0) -- (120:2);
        \draw [->] (0,0) -- (0,1);
        \draw [->] (0,0) -- (30:1);
        \coordinate [label=above:$v_1$] (v1) at (0,1); 
        \coordinate [label=above:$v_2$] (v2) at (30:1);
        \coordinate [label=above:$\lambda$] (lambda) at (3,1);
        \fill [gray, opacity=0.3] (lambda) -- (3,0) -- ($(lambda) + (210:2)$) -- (lambda);
        \draw [->, gray] (lambda) -- (3, -0.7);
        \draw [->, gray] (lambda) -- ($(lambda) + (210: 3)$); 
        \coordinate [label = above:$C_+$] (Cplus) at (2,1);
        \coordinate [label = below:$Q^\lambda$] (Qlambda) at (2,-0.2);
        \draw [thick] (lambda) -- (3,0) -- ($(lambda) + (210:2)$) -- (lambda);
        \coordinate [label = above:$P^\lambda$] (P) at (2.5,0.1);
        \coordinate [label=below:$0$] (o) at (0,0);
    \end{tikzpicture}
    \caption{In the Euclidean plane, let $v_1 = (0,1)$ and $v_2 = (\frac{\sqrt{3}}{2}, \frac{1}{2})$. Then, $(v_1|v_2) > 0$. Let $\lambda = (3,1)$. The shaded area is the polytope $P^\lambda := \overline{C_+} \cap \overline{Q^\lambda}$.}
    \label{fig-dim2-noncube}
\end{figure}
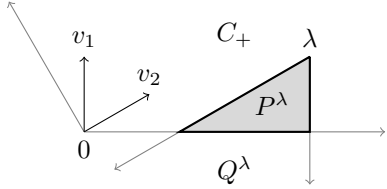

\section{An application to Peterson varieties in classical types} \label{Sec-Application}
Let $G$ be a connected, simply connected, semisimple algebraic group over $\mathbb{C}$, $B$ a Borel subgroup of $G$, and $T \subset B$ a maximal torus. Let $\mathfrak{g}, \mathfrak{b}$, and $\mathfrak{t}$ be the Lie algebras of $G, B$, and $T$, respectively. Let $\mathfrak{g}=\mathfrak{t} \oplus \bigoplus_{\alpha \in \Phi} \mathfrak{g}_\alpha$ be the corresponding root space decomposition, where $\Phi$ is the set of roots. Let $\Delta \subset \Phi$ be the set of simple roots. For each $\alpha \in \Delta$, we choose a non-zero element $e_\alpha \in \mathfrak{g}_{\alpha}$. 
Let $e \in \mathfrak{g}$ be the regular nilpotent element given by 
\begin{equation*}
e:=\sum_{\alpha \in \Delta} e_\alpha \in \mathfrak{g}.
\end{equation*}
The \emph{Peterson variety} $Y \subseteq G/B$, defined by 
\begin{equation*} 
Y:=\Bigl\{g B \in G / B \Bigm| \operatorname{Ad}_{g^{-1}} e \in \mathfrak{b} \oplus \bigoplus_{\alpha \in \Delta} \mathfrak{g}_{-\alpha}\Bigr\},
\end{equation*}
is a remarkable subvariety of the flag variety $G / B$ introduced by Peterson \cite{peterson1997quantum} to study the quantum cohomology ring of the partial flag varieties $G^{\vee}  / P^{\vee}$ for all parabolic subgroups $P^{\vee} \subseteq G^{\vee}$ containing $B^{\vee}$ ($^{\vee}$ means the Langlands dual). It is known that $Y$ is an irreducible projective variety with $\operatorname{dim}_{\mathbb{C}} Y=r$, where $r$ is the rank of $G$ \cite{peterson1997quantum}. 

Let $H^*(Y ; \mathbb{Q})$ be the singular cohomology ring of the Peterson variety $Y$ with rational coefficients. Let $b_i:=\operatorname{dim}_{\mathbb{Q}} H^i(Y; \mathbb{Q})$ be the Betti numbers and $h_{Y}(q):=\sum_{i} b_{i} q^{i/2}$ be the Poincar\'e polynomial. By virtue of the bridge provided by recent results in \cite{horiguchi2021toric}, we use Theorem \ref{cube} to get formulas for the Betti numbers and the Poincar\'e polynomial of the Peterson variety in classical types, which already appear in \cite[p. 199]{brion2004equivariant} and \cite[Equation (2.3)]{harada2015equivariant}. 

\begin{theorem}\label{thm-peterson-betti}
   Let $G$ be a connected, simply-connected, classical type simple algebraic group of rank $r$ over $\mathbb{C}$. Let $Y \subseteq G / B$ be the corresponding Peterson variety. Then $h_{Y}(q)=(1+q)^r$,
    that is, $b_{2i+1}=0$ and $b_{2i}=\binom{r}{i}$ is the binomial coefficient.
\end{theorem}

Horiguchi--Masuda--Shareshian--Song proved in \cite[Corollary 6.3]{horiguchi2021toric} that as graded rings, 
\begin{equation} \label{Peterson=toric}
H^*(Y ; \mathbb{Q}) \cong H^*(X(P^\lambda) ; \mathbb{Q}),
\end{equation}
where $\lambda$ is some regular integral weight, and $X(P^\lambda)$ is the toric variety associated with the normal fan of the dominant weight polytope $P^\lambda$---which is their ``partitioned weight polytope'' $P_{\Phi}(K)$ for $K=[r]$, where $\Phi$ is the root system of $G$. Although their $P_{\Phi}([r])$ is the intersection of the weight polytope with the anti-dominant chamber, this difference is not important since it does not affect our arguments. 

\begin{proof}[Proof of Theorem \ref{thm-peterson-betti}] Since $P^\lambda$ is a simple rational (with respect to the integral weight lattice, see Corollary \ref{cor-vertices}) polytope, it is well-known that the corresponding toric variety $X(P^\lambda)$ is an orbifold with only finite quotient singularities, hence 
\begin{equation} \label{H=IH}
 H^i(X(P^\lambda) ; \mathbb{Q}) \cong IH^i(X(P^\lambda) ; \mathbb{Q}),
\end{equation}
where $IH^i(X(P^\lambda) ; \mathbb{Q})$ is the rational middle-perversity cohomology group of $X(P^\lambda)$ as in \cite{maxim2019intersection}. 
Furthermore, $IH^{2i+1}(X(P^\lambda); \mathbb{Q})=0$, and the Poincar\'e polynomial of $IH^i(X(P^\lambda) ; \mathbb{Q})$ is completely determined by the face numbers of $P^\lambda$ via 
\begin{equation} \label{h-vector}
\sum_{i} \operatorname{dim}_{\mathbb{Q}} IH^{2i}(X(P^\lambda) ; \mathbb{Q}) \cdot q^{i} = \sum_{i=0}^r f_{i}(q-1)^{i},
\end{equation}
where $f_{i}$ is the number of the $i$-dimensional faces of $P^\lambda$ as in \cite{stanley1980number}. 

Since $P^\lambda$ is combinatorially equivalent to the $r$-dimensional cube, we have $f_{i}=\binom{r}{i}2^{r-i}$.
From this, together with \eqref{Peterson=toric}, \eqref{H=IH}, \eqref{h-vector}---or the fact that the toric variety corresponding to the standard $r$-dimensional cube $[0,1]^r$ is $\left(\mathbb{C P}^1\right)^r$---we get $h_{Y}(q)=(1+q)^r$, as desired.
\end{proof}

Rietsch studied the totally non-negative part $Y_{\geq 0}:=Y\cap (SL_{r+1}/B)_{\geq 0}$ of $Y$ \cite{Rietsch2006}, where $(SL_{r+1}/B)_{\geq 0}$ is the totally non-negative part of the flag variety in type $A$, as defined by Lusztig. 
Using ``mirror constructions'', she obtained an elementary proof---without relying on quantum cohomology positivity statements---that $Y_{\geq 0}$ has a cell decomposition given by intersections with open Richardson varieties \cite[Theorem 10.2]{Rietsch2006}. 
She also conjectured that $Y_{\geq 0}$ is homeomorphic to the cube $[0,1]^{r}$ as a cell decomposed space \cite[Conjecture 10.3]{Rietsch2006}. 
In \cite{abe2023totally}, Abe and Zeng constructed a combinatorial cube (which is different from the dominant weight polytope) closely related to the Peterson variety in type $A$ in order to prove Rietsch’s cell decomposition conjecture. 
They also constructed in \cite{abe2023peterson} an explicit morphism from the Peterson variety to the toric orbifold arising from the corresponding root system in any Lie type, and this morphism induces a ring isomorphism between their rational cohomology rings. 
Based on their results and the results in this paper, we pose the following question.

\begin{question}
Is there a cell decomposition of the totally non-negative part $Y_{\geq 0}$ of the Peterson variety in any Lie type, such that $Y_{\geq 0}$ is homeomorphic to the cube $[0,1]^{\operatorname{dim}_{\mathbb{C}} Y}$ as a cell decomposed space?
\end{question}

If the answer to the above question is yes, our main theorem could be seen as a ``combinatorial shadow'' of the cell decomposition of $Y_{\geq 0}$.

\bibliographystyle{amsplain}
\bibliography{template}

\end{document}